\newtheorem {theorem}{Theorem}
\newtheorem {lemma} [theorem] {Lemma}
\newtheorem {corollary} [theorem] {Corollary}
\newtheorem {question}[theorem] {Question}
\newtheorem{claim} {Claim}
\theoremstyle{definition}
\newtheorem{definition}[theorem]{Definition}
\newtheorem{remark}[theorem]{Remark}
\def\co{\colon\thinspace}
\def\Z {\mathbb{Z}}
\begin{document}

\title{immutability is not uniformly decidable in hyperbolic groups}

\author[Daniel Groves]{Daniel Groves}
\address{Daniel Groves\\
Department of Mathematics, Statistics and Computer Science, University of Illinois at Chicago, 322 SEO, \textsc{M/C} 249\\
851 S. Morgan St.\\
Chicago, IL 60607-7045, USA}
\thanks{The work of the first author was supported by the National Science Foundation and by a grant from the Simons Foundation (\#342049 to Daniel Groves)}
\email{groves@math.uic.edu}

\author[Henry Wilton]{Henry Wilton}
\address{Henry Wilton\\ Centre for Mathematical Sciences, Wilberforce Road, Cambridge, CB3 0WB\\UNITED KINGDOM} 
\thanks{The third author is partially funded by EPSRC Standard Grant number EP/L026481/1.  This paper was completed while the third author was participating in the \emph{Non-positive curvature, group actions and cohomology} programme at the Isaac Newton Institute, funded by EPSRC Grant number EP/K032208/1.}
\email{h.wilton@maths.cam.ac.uk}

\date{March 16, 2017.}

\begin{abstract}
A finitely generated subgroup $H$ of a torsion-free hyperbolic group $G$ is called \emph{immutable} if there are only finitely many conjugacy classes of injections of $H$ into $G$. We show that there is no uniform algorithm to recognize immutability, answering a uniform version of a question asked by the authors.
\end{abstract}

\maketitle

In \cite{GrovesWilton10} we introduced the following notion which is important for the study of conjugacy classes of solutions to equations and inequations over torsion-free hyperbolic groups, and also for the study of limit groups over (torsion-free) hyperbolic groups.

\begin{definition} \cite[Definition 7.1]{GrovesWilton10}
Let $G$ be a group.  A finitely generated subgroup $H$ of $G$ is called {\em immutable} if there are finitely many injective homomorphisms $\phi_1 , \ldots , \phi_N \co H \to G$ so that any injective homomorphism $\phi \co H \to G$ is conjugate to one of the $\phi_i$.
\end{definition}

We gave the following characterization of immutable subgroups.

\begin{lemma} \cite[Lemma 7.2]{GrovesWilton10}
Let $\Gamma$ be a torsion-free hyperbolic group. A finitely generated subgroup of $\Gamma$ is immutable if and only if it does not admit a nontrivial free splitting or an essential splitting over $\Z$.
\end{lemma}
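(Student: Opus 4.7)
The plan is to prove both implications. For the forward direction (a splitting implies non-immutability), I would construct explicit families of pairwise non-conjugate injections; for the converse, I would apply the Bestvina--Paulin construction of limit actions on $\mathbb{R}$-trees followed by the Rips machine.

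Fix an injection $\phi_0 \co H \hookrightarrow \Gamma$. If $H = A * B$ is a nontrivial free splitting, consider the family $\phi_\gamma \co H \to \Gamma$ defined by $\phi_\gamma|_A = \phi_0|_A$ and $\phi_\gamma|_B = \mathrm{ad}(\gamma) \circ \phi_0|_B$. Taking $\gamma = g^n$ for $g$ loxodromic in $\Gamma$ whose fixed points on $\partial \Gamma$ are in sufficiently generic position with respect to the limit sets of $\phi_0(A)$ and $\phi_0(B)$, a standard ping-pong argument on $\partial \Gamma$ ensures each $\phi_{g^n}$ is injective, while an invariant such as the translation length of $\phi_{g^n}(ab)$ for suitable $a \in A$, $b \in B$ separates infinitely many $\Gamma$-conjugacy classes among the $\phi_{g^n}$. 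The HNN case is analogous, twisting the stable letter. If instead $H$ admits an essential splitting over $\langle c \rangle \cong \Z$, the Dehn twist $\tau \in \Aut(H)$ about $c$ has infinite order in $\Out(H)$ (this is exactly where essentialness is used), so the injections $\phi_0 \circ \tau^n$ are pairwise non-conjugate.

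For the converse, let $\phi_n \co H \hookrightarrow \Gamma$ be pairwise non-conjugate injections. After replacing each $\phi_n$ by a suitable conjugate (minimizing the $\phi_n$-translation length of a generating set at a basepoint) and rescaling the word metric on $\Gamma$, the Bestvina--Paulin construction yields a nontrivial minimal isometric $H$-action on an $\mathbb{R}$-tree $T$. Non-triviality is forced by pairwise non-conjugacy: otherwise a subsequence of the $\phi_n$ would accumulate, contradicting our assumption. The Rips machine then decomposes the action on $T$ as a graph of actions whose pieces are of simplicial, surface, or axial type; from any such piece one extracts a nontrivial splitting of $H$ whose edge groups embed into arc stabilizers of $T$. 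Because $\Gamma$ is torsion-free hyperbolic and arc stabilizers in $T$ are limits of centralizers in $\Gamma$, these stabilizers are trivial or infinite cyclic, producing either a free splitting or a $\Z$-splitting of $H$. Minimality of the Rips decomposition ensures that the resulting $\Z$-splitting is essential.

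The principal obstacle is the converse direction: producing the nontrivial limit action, controlling its arc stabilizers, and verifying that the extracted splitting is essential rather than degenerate (for instance, a splitting whose edge group coincides with a vertex group). The arc-stabilizer control and essentialness require careful invocation of the Rips--Sela machinery in the torsion-free hyperbolic setting and constitute the technical crux of the argument.
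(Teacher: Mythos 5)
First, a point of order: this paper does not prove the lemma at all --- it is quoted verbatim from \cite[Lemma 7.2]{GrovesWilton10} and used as a black box --- so there is no in-paper argument to compare against. Measured against the proof in that reference, your outline follows the same standard route (bending/Dehn twists to manufacture non-conjugate injections in one direction; Bestvina--Paulin limits and the Rips machine in the other), and the free-product bending argument and the limiting-tree argument are essentially right in outline. However, there is a genuine gap in the essential $\Z$-splitting case. You assert that since the Dehn twist $\tau$ has infinite order in $\Out(H)$, the injections $\phi_0 \circ \tau^n$ are pairwise non-conjugate. That does not follow: conjugacy of injections $H \to \Gamma$ is implemented by elements of $\Gamma$, not of $H$. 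If $\phi_0\circ\tau^n$ and $\phi_0\circ\tau^m$ are conjugate, one deduces $\phi_0\circ\tau^{n-m} = \mathrm{ad}(\gamma)\circ\phi_0$ for some $\gamma$ in the normalizer $N_\Gamma(\phi_0(H))$, which may be strictly larger than $\phi_0(H)$ (for instance, $\phi_0(H)$ could be an infinite-index normal subgroup, as for a fibre subgroup of a fibred hyperbolic $3$-manifold group). The set of automorphisms of $H$ induced by such $\gamma$ is a subgroup of $\Aut(H)$ that contains, but may be larger than, $\mathrm{Inn}(H)$, so infinite order in $\Out(H)$ does not rule out $\tau^{n-m}$ lying in it. The repair is the one you already use in the free-product case: exhibit a conjugacy invariant computed in $\Gamma$ itself, e.g.\ show that for suitable $h \in H$ the translation lengths of $\phi_0(\tau^n(h))$ in $\Gamma$ are unbounded (via quasigeodesic concatenation estimates for powers of $\phi_0(c)$), and use that a hyperbolic group has only finitely many conjugacy classes of bounded translation length.

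Two further soft spots in the converse direction deserve flagging, though they are standard rather than fatal. Arc stabilizers of the limit tree are not literally ``limits of centralizers''; the correct statement is that if two elements fix a common nondegenerate arc then their images under $\phi_n$ have coarsely parallel axes for large $n$, which in a torsion-free hyperbolic group forces them into a common cyclic subgroup --- this is what yields trivial-or-cyclic arc stabilizers and stability of the action. And the final step, ``minimality of the Rips decomposition ensures the resulting $\Z$-splitting is essential,'' is an assertion, not an argument: one must rule out that every splitting the machine outputs is inessential (e.g.\ collapses after folding an edge group into a vertex group), and one must take care that the Rips theory being invoked applies to finitely generated (not necessarily finitely presented) groups with stable actions. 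These are exactly the points where the cited proof leans on the Rips--Sela machinery, and a complete write-up would have to address them explicitly.
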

The following corollary is immediate.
\begin{corollary} \label{c:imm rigid}
 Let $\Gamma$ be a torsion-free hyperbolic group and suppose that $H$ is a finitely generated subgroup.  If for every action of $H$ on a simplicial tree with trivial or cyclic edge stabilizers $H$ has a global fixed point then $H$ is immutable.
\end{corollary}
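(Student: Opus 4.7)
The plan is to prove the contrapositive and rely entirely on the characterization of immutability given in the preceding lemma. Suppose $H$ is not immutable. By the lemma, $H$ admits either a nontrivial free splitting or an essential splitting over $\Z$. In either case, Bass--Serre theory gives an action of $H$ on the associated simplicial tree $T$ whose edge stabilizers are trivial (in the free case) or infinite cyclic (in the $\Z$ case).

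The key point is that such a splitting, being nontrivial or essential, produces a $T$ on which $H$ does not have a global fixed point: if $H$ fixed a vertex then that vertex group would be all of $H$ and the splitting would be trivial, contradicting the hypothesis of the lemma. Thus $H$ acts on a simplicial tree with trivial or cyclic edge stabilizers without a global fixed point, contradicting the hypothesis of the corollary.

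There is no real obstacle here; the statement is indeed immediate once one unpacks the standard Bass--Serre correspondence between splittings and actions on trees, and the convention that ``nontrivial'' (resp.\ ``essential'') splitting is exactly the statement that the corresponding action has no global fixed vertex. The only thing to note carefully is the interpretation of ``essential'' for splittings over $\Z$, which in this setting means precisely that neither vertex group (in the one-edge case) or no vertex group (in general) equals $H$, equivalently that $H$ does not fix a point in the Bass--Serre tree.
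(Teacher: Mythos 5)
Your proof is correct and matches the paper's intent: the paper simply declares the corollary ``immediate'' from the preceding lemma, and your contrapositive argument via the Bass--Serre correspondence (a nontrivial or essential splitting with trivial or cyclic edge groups yields a tree action with no global fixed vertex) is exactly the reasoning being left implicit. One small caveat: ``essential'' for splittings over $\Z$ is generally a \emph{stronger} condition than merely ``nontrivial,'' so your parenthetical identification of the two is not quite right as a definition --- but since essential implies nontrivial, the only implication you actually need (no global fixed point) still holds, and the argument is unaffected.
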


If $\Gamma$ is a torsion-free hyperbolic group then the
immutable subgroups of $\Gamma$ form some of the essential building blocks of the structure of $\Gamma$--limit groups.  See \cite{GrovesWilton10} and \cite{GrovesWilton16} for more information.

In \cite[Theorem 1.4]{GrovesWilton10} we proved that given a torsion-free hyperbolic group $\Gamma$ it is possible to recursively enumerate the finite tuples of $\Gamma$ which generate immutable subgroups.  This naturally lead us to ask the following
\begin{question}\cite[Question 7.12]{GrovesWilton10}
 Let $\Gamma$ be a torsion-free hyperbolic group.  Is there an algorithm that takes as input a finite subset $S$ of $\Gamma$ and decides whether or not the subgroup $\langle S \rangle$ is immutable?
\end{question}
We are not able to answer this question, but we can answer the {\em uniform} version of this question in the negative, as witnessed by the following result.  It is worth remarking that the algorithm from \cite[Theorem 1.4]{GrovesWilton10} {\em is} uniform, in the sense that one can enumerate pairs $(\Gamma,S)$ where $\Gamma$ is a torsion-free hyperbolic group (given by a finite presentation) and $S$ is a finite subset of words in the generators of $\Gamma$ so that $\langle S \rangle$ is immutable in $\Gamma$.

\begin{theorem} \label{t:not recognize immutable}
There is no algorithm which takes as input a presentation of a (torsion-free) hyperbolic group and a finite tuple of elements, and determines whether or not the tuple generates an immutable subgroup.
\end{theorem}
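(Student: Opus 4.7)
The plan is a computable reduction from an undecidable Markov property of finite group presentations (Adian--Rabin), such as triviality, to uniform immutability. Given a finite presentation $Q = \langle X \mid R \rangle$, I would apply a Rips-type construction to effectively produce a torsion-free hyperbolic group $\Gamma_Q$, given as an explicit $C'(1/6)$ small-cancellation presentation, along with a two-generator normal subgroup $N_Q = \langle a, b \rangle$ fitting into a short exact sequence $1 \to N_Q \to \Gamma_Q \to Q \to 1$. Both the presentation of $\Gamma_Q$ and the generating tuple $(a,b)$ are algorithmically computable from $Q$, so a uniform algorithm for immutability would decide whether $N_Q$ is immutable in $\Gamma_Q$, and this is what we will translate into undecidability of the chosen property of $Q$.

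The crux is to arrange a dichotomy of the form ``$N_Q$ is immutable in $\Gamma_Q$ if and only if $Q$ has the chosen property,'' which by the preceding lemma reduces to analysing whether $N_Q$ admits a nontrivial free splitting or an essential $\Z$-splitting. In the ``immutable'' branch --- say $Q$ trivial, so that $N_Q = \Gamma_Q$ is a specific two-generator hyperbolic group entirely prescribed by the Rips recipe --- one verifies by direct inspection of the small-cancellation presentation that there is no action on a simplicial tree with trivial or cyclic edge stabilizers without a global fixed point, and invokes Corollary \ref{c:imm rigid}. In the ``non-immutable'' branch, one must produce, from the nontriviality of $Q$, either a concrete free or $\Z$-splitting of $N_Q$ or, equivalently, an infinite family of non-conjugate injections $N_Q \hookrightarrow \Gamma_Q$.

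The main obstacle is the non-immutable branch. Because $N_Q$ is generally infinitely presented and rarely hyperbolic, its splittings and endomorphism structure are not directly accessible via JSJ theory applied to $N_Q$ alone. One workable approach is to use a variant of Rips's construction (for instance, Bumagin--Wise or Haglund--Wise) whose kernel is quasiconvex and thus itself a torsion-free hyperbolic group, making the JSJ machinery for $N_Q$ directly available. A complementary route is to exploit the conjugation action of $Q = \Gamma_Q/N_Q$ on $N_Q$: provided the image in $\mathrm{Aut}(N_Q)$ is not absorbed into inner automorphisms of $\Gamma_Q$, one obtains infinitely many pairwise non-conjugate injections $N_Q \hookrightarrow \Gamma_Q$, precluding immutability. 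Verifying either mechanism uniformly as $Q$ varies is where the technical content of the theorem concentrates, and the specific undecidable source property chosen will need to be matched to whichever approach goes through cleanly.
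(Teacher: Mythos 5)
There is a genuine gap, and it sits exactly where you locate it: the non-immutable branch. Neither of the mechanisms you propose can work. First, the conjugation action of $Q=\Gamma_Q/N_Q$ on $N_Q$ never produces non-conjugate injections: for $g,h\in\Gamma_Q$ the embeddings $n\mapsto gng^{-1}$ and $n\mapsto hnh^{-1}$ are conjugate in $\Gamma_Q$ via $gh^{-1}$, since they are restrictions of inner automorphisms of the ambient group. Immutability is about conjugacy classes of injections \emph{into} $\Gamma_Q$, not about $\Out(N_Q)$, so this route is dead on arrival. Second, there is no Rips-type construction with quasiconvex kernel for infinite $Q$: an infinite quasiconvex normal subgroup of a hyperbolic group has finite index, so the kernel of $\Gamma_Q\to Q$ is never quasiconvex when $Q$ is infinite, and the JSJ machinery for $N_Q$ is not available this way. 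Worst of all, if you use the Property (T) version of the Rips construction (Belegradek--Osin), then $N_Q$ has Property (FA), fixes a point in every action on a tree, and is therefore immutable by Corollary \ref{c:imm rigid} \emph{regardless} of $Q$ --- the intended dichotomy is simply false for the subgroup $N_Q$ itself. For the plain $C'(1/6)$ version you have no handle at all on whether $N_Q$ splits, so the reduction does not close.

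The paper's proof avoids testing the Rips kernel itself. It fixes one $Q$ with unsolvable word problem, applies the Belegradek--Osin construction to $Q\ast\Z$ to get $1\to N\to G\to Q\ast\Z\to 1$ with $N$ having Property (T), and for each word $u$ builds a rank-$2$ free quasiconvex subgroup $J_u=\langle g_u,h_u\rangle\le G$ whose image in $Q\ast\Z$ is trivial when $u=_Q 1$ and free of rank $2$ otherwise; thus $J_u\cap N$ is all of $J_u$ in the first case and trivial in the second. It then amalgamates $G$ with a fixed Property (T) hyperbolic group $\Gamma_0$ along $J_u$ (identified with a malnormal quasiconvex free subgroup $\langle a,b\rangle\le\Gamma_0$) and tests the subgroup $K_u=\langle N,\Gamma_0\rangle$ of the resulting hyperbolic group $\Gamma_u$. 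When $u\ne_Q 1$ one gets $K_u\cong N\ast\Gamma_0$, a visible free splitting; when $u=_Q 1$ one gets $K_u\cong N\ast_{\langle a,b\rangle}\Gamma_0$, and Property (FA) for $N$ and $\Gamma_0$ together with the fact that their intersection has rank $2$ (too big to stabilize an edge) forces a global fixed point in any tree with trivial or cyclic edge stabilizers, so $K_u$ is immutable by Corollary \ref{c:imm rigid}. The essential idea you are missing is this intersection-with-the-kernel trick, which converts the word problem into a free-splitting/no-splitting dichotomy for an explicitly computable subgroup; note also that the paper reduces from the word problem in a fixed group rather than from an Adian--Rabin property, which sidesteps the need to certify immutability uniformly over a whole family of presentations in the positive branch.
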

\begin{proof}
Let $\Gamma_0$ be a non-elementary, torsion-free, hyperbolic group with Property (T) and let $\{a ,b \} \in \Gamma_0$ be such that $\langle a,b \rangle$ is a nonabelian free, malnormal and quasi-convex subgroup of $\Gamma_0$.  There are many hyperbolic groups with Property (T) (see, for example, \cite{Zuk}).  The existence of such a pair $\{ a, b\}$ follows immediately from \cite[Theorem C]{Kapovich99}.  Throughout our proof, $\Gamma_0$ and $\{ a, b\}$ are fixed.

Consider a finitely presented group $Q$ with unsolvable word problem (see \cite{Novikov}), and let $G$ be a hyperbolic group that fits into a short exact sequence
\[	1 \to N \to G \to Q\ast \Z \to 1	,	\]
where $N$ is finitely generated and has Kazhdan's Property (T).  Such a $G$ can be constructed using \cite[Corollary 1.2]{BO}, by taking $H$ from that result to be a non-elementary hyperbolic group with Property (T), and recalling that having Property (T) is closed under taking quotients.

Let $t$ be the generator for the second free factor in $Q \ast \Z$. Given a word $u$ in the generators of $Q$, define words
\[	c_u = tut^{-2}ut	,	\]
and
\[	d_u = utut^{-1}u	.	\]

\begin{claim}\label{Claim 1}
If $u =_Q 1$ then $\langle c_u , d_u \rangle = \{ 1 \}$ in $Q\ast \Z$.  If $u \ne_Q 1$ then $\langle c_u, d_u \rangle$ is free of rank $2$ in $Q \ast \Z$.
\end{claim}
\begin{proof}[Proof of Claim \ref{Claim 1}]
The first assertion of the claim is obvious, and the second follows from the fact that if $u$ is nontrivial in $Q$ then any reduced word in $\{ c_u,d_u \}^\pm$ yields a word in $\{ t,u \}^\pm$ which is in normal form in the free product $Q \ast \Z$, and hence is nontrivial in $Q \ast \Z$.
\end{proof}

We lift the elements $c_u,d_u\in Q\ast\Z$ to elements $\bar{c}_u,\bar{d}_u\in G$.

\begin{claim} \label{Claim 2}
Given words $c_u$ and $d_u$, it is possible to algorithmically find words $w_u,x_u,y_u,z_u \in N$ so that
$\langle w_u \bar{c}_u x_u, y_u \bar{d}d_u z_u \rangle$ is quasi-convex and free of rank $2$.
\end{claim}
\begin{proof}[Proof of Claim \ref{Claim 2}]
It is well known (see, for example, \cite[Lemma 4.9]{AGM_msqt}) that in a $\delta$-hyperbolic space a path which is made from concatenating geodesics whose length is much greater than the Gromov product at the concatenation points is a uniform-quality quasi-geodesic, and in particular not a loop.  

By considering geodesic words representing $\bar{c}_u$ and $\bar{d}_u$, it is possible to find long words in the generators of $N$ as in the statement of the claim so that any concatenation of $(w_u\bar{c}_ux_u)^{\pm}$ and $(y_u\bar{d}_uz_u)^{\pm}$ is such a quasigeodesic.  From this, it follows immediately that the free group $\langle w_u \bar{c}_u x_u, y_u \bar{d}_u z_u \rangle$ is quasi-isometrically embedded and has free image in $G$.  This can be done algorithmically because the word problem in $G$ is (uniformly) solvable, so we can compute geodesic representatives for words and calculate Gromov products.
\end{proof}

Let $g_u = w_u \bar{c}_u x_u$ and $h_u = y_u \bar{d}_u z_u$, and let $J_u = \langle g_u, h_u \rangle$.  Note that the image of $J_u$ in $Q$ is either trivial (if $u =_Q 1$) or free of rank $2$ (otherwise).  Therefore, if $u =_Q 1$ then $J_u \cap N = J_u$ and otherwise $J_u \cap N = \{ 1 \}$.

Now consider the group
\[	\Gamma_u = G \ast_{\{ g_u = a, h_u = b \}} \Gamma_0		.	\]
Since $\langle a,b \rangle$ is malnormal and quasiconvex in $\Gamma_0$ and $\langle g_u, h_u \rangle$ is quasiconvex in $G$, the group $\Gamma_u$ is hyperbolic by the Bestvina--Feighn Combination Theorem \cite{BF:combination}.

Let $K_u = \langle N, \Gamma_0 \rangle \le \Gamma_u$.  We remark that a presentation for $\Gamma_u$ and generators for $K_u$ as a subgroup of $\Gamma_u$ can be algorithmically computed from the presentations of $G$ and $\Gamma_0$ and the word $u$.

\begin{claim} \label{Claim 3}
If $u =_Q 1$ then $K_u$ is immutable.  If $u \ne_Q 1$ then $K_u$ splits nontrivially over $\{ 1 \}$ and so is not immutable.
\end{claim}
\begin{proof} [Proof of Claim \ref{Claim 3}]
Let $N_u = N \cap J_u$. We observed above that if $u =_Q 1$ then $N_u = J_u$, and that if $u\ne_Q 1$ then $N_u = \{ 1 \}$.  By considering the induced action of $K_u$ on the Bass-Serre tree of the splitting of $\Gamma_u$ given by the defining amalgam, we see that in case $u =_Q 1$ we have
\[	K_u \cong N \ast_{\{ g_u = a, h_u = b\}} \Gamma_0 ,	\]
whereas in case $u \ne_Q 1$ we have
\[	K_u \cong N \ast \Gamma_0	. \]
Thus, if $u \ne_Q 1$ then $K_u$ splits nontrivially as a free product, as required.

On the other hand, suppose that $u =_Q 1$, and suppose that $K_u$ acts on a tree $T$ with trivial or cyclic edge stabilizers.  Since Property (T) groups have Property (FA) \cite{Watatani}, $N$ and $\Gamma_0$ must act elliptically on $T$.  However, if they do not have a common fixed vertex, then their intersection (which is free of rank $2$) must fix the edge-path between the fixed point sets for $N$ and for $\Gamma_0$, contradicting the assumption that edge stabilizers are trivial or cyclic.  Thus, there is a common fixed point for $N$ and $\Gamma_0$, and so $K_u$ acts on $T$ with global fixed point.  It follows from Corollary \ref{c:imm rigid} that $K_u$ is immutable, as required.  
\end{proof}
An algorithm as described in the statement of the theorem would (when given the explicit presentation of $\Gamma_u$ and the explicit generators for $K_u$) be able to determine whether or not $K_u$ is immutable.  In turn, this would decide the word problem for $Q$, by Claim \ref{Claim 3}.  Since this is impossible, there is no such algorithm, and the proof of Theorem \ref{t:not recognize immutable} is complete.
\end{proof}

\begin{remark}
By taking only a cyclic subgroup to amalgamate in the definition of $\Gamma_u$, instead of a free group of rank $2$, it is straightforward to see that one cannot decide whether non-immutable subgroups split over $\{ 1 \}$ or over $\{ \Z \}$.
\end{remark}

\end{document}